\DeclarePairedDelimiter\floor{\lfloor}{\rfloor}
\newtheorem{theorem}{Theorem}
\newtheorem{proposition}{Proposition}
\newtheorem{lemma}{Lemma}
\newtheorem{question}{Question}
\newtheorem{definition}{Definition}
\newtheorem{corollary}{Corollary}
\newtheorem{remark}{Remark}
\newcommand{\newword}[1]{\textbf{\emph{#1}}}
\newcommand{\ZZ}{\mathbb{Z}}
\newcommand{\N}{\mathcal{N}}
\newcommand{\F}{\mathcal{F}}
\begin{document}

%% define your title in the usual way
\title{Necklaces and Slimes}

%% define your authors in the usual way
%% use \addressmark{1}, \addressmark{2} etc for the institutions, and use \thanks{} for contact details
\author{Suho Oh and Jina Park}
\maketitle

%% then use \addressmark to match authors to institutions here
%\address{\addressmark{1}Department of Mathematics, Texas State University, San Marcos TX \\ \addressmark{2}Department of Mathematics, Texas State University, San Marcos TX}

%% put the date of submission here
%%%%\received{\today}

%% leave this blank until submitting a revised version
%\revised{}

%% put your English abstract here, or comment this out if you don't have one yet
%% please don't use custom commands in your abstract / resume, as these will be displayed online
%% likewise for citations -- please don't use \cite, and instead write out your citation as something like (author year)
\begin{abstract}
We show there is a bijection between the binary necklaces with $n$ black beads and $k$ white beads and certain $(n,k)$-codes when $n$ is prime. The main idea is to come up with a new map on necklaces called slime migration.
\end{abstract}

\section{Introduction}
Let $n$ and $k$ be two positive integers. The main objects of this paper are the following:
\begin{itemize}
\item The set $\N_{n,k}$ of \newword{binary necklaces} (i.e. equivalent up to cyclic rotations) of length $n+k$ using $n$ black beads and $k$ white beads.
\item The set $\F_{n,k}$ of \newword{$(n,k)$-codes}, functions $f : [n] \rightarrow \ZZ^{\geq 0}$, for which their sum is $k$. The set $\F_{n,k}$ is further divided into sets $\F_{n,k,t}$ of \newword{$(n,k,t)$-codes} for $t \in \{0,\ldots,n-1\}$, where the weighted sum is $t$ modulo $n$:
$$\F_{n,k,t} := \{f | \sum_{i \in [n]} f(i) = k, \sum_{i \in [n]} if(i) \equiv t  \text{    (mod n)}\}.
$$
\end{itemize}

When $n$ is an odd positive integer, the cardinality of $\N_{n,k}$ and $\F_{n,k,0}$ is known to both equal $\frac{1}{n+k}\sum_{m|n,m|k} \varphi(m) {(n+k)/m \choose n/m}$ \cite{Chan},\cite{ACH}. It was asked in \cite{Chan},\cite{ACH} if there is a bijective proof for this. A bijective proof in the case $n$ and $k$ are coprime was given in \cite{BGP}. In this paper we construct a bijection when $n$ or $k$ is a prime number.

The proof in the case $n$ and $k$ are coprime is pretty simple: observe the fact that the weighted sum of a code changes by $k$ upon rotation. So the rotation map gives a bijection between the $\F_{n,k,t}$'s. This induces a natural bijection between $\N_{n,k}$ and $\F_{n,k,0}$. Sadly this approach does not work when $n$ and $k$ are not coprime. Regardless, one approach would be to construct a new map that is different from rotation, but still provides a nice bijection between the $\F_{n,k,t}$'s (excluding one object from $\F_{n,k,0}$). We accomplish this by using a model where several slimes in a circle all move in the same direction.

\begin{remark}
It was shown in Problem 2.11 of \cite{CP} that there is a bijection between $\F_{n,k,0}$ and the collection of out-of-debt chip-firing states on the cyclic graph $C_n$ starting with $k$ chips on a fixed vertex (going into debt before getting to the final state is allowed).
\end{remark}

\begin{remark}
It was shown in \cite{Chan} that the number of necklaces of length $n$ with at most $q$ colors and the number of codes with $n$ entries from $\{0,\ldots,q-1\}$ such that their weighted sum equals $0$ modulo $n$ is the same when $n$ and $q$ are coprime. A bijective proof was given when $q$ is a prime power. The sets $\N$ and $\F$ are different from the sets we use in our paper. We study necklaces that have $n$ black beads and $k$ white beads, whereas Chan studied necklaces that have $n$ total beads with at most $q$ colors. 
\end{remark}

%It was shown in \cite{Chan} that there is a bijection between necklaces of length $n$ and $k$ colors, and $\F_{n,k} := \union_{t} \F_{n,k,t}$. Also the set $\F_{n,k,0}$ turns out to be the collection of out-of-debt chip-firing states on the cyclic graph $C_n$ starting with $k$ chips on a vertex (going into debt before getting to the final state is allowed) \cite{CP}. In this paper we are going to show a bijection between $\N_{n,k}$ and $\F_{n,k,0}$ when $n$ is prime. 

\section{Codes and Slimes}

We always envision $[n] := \{1,\ldots,n\}$ to be having the cyclic structure of $\ZZ_n$. A cyclic interval $[i,j]$ in $[n]$ denotes $\{i,i+1,\ldots,j\}$ in $\ZZ_n$. All intervals we consider in this paper will be cyclic intervals. Let $f$ be an $(n,k)$-code. Let $m_f$ denote the largest among sum of two (position-wise) consecutive entries in $f$. For a cyclic interval $[i,j]$ in $[n]$ of size at least $2$, the corresponding sequence $f_i,\ldots,f_j$ is a \newword{weak-slime} of $f$ if $f_i+f_{i+1} = \cdots = f_{j-1}+f_j = m_f$. A weak-slime is a \newword{slime} if $f_{i-1}+f_i$ and $f_j+f_{j+1}$ are both strictly less than $m_f$ (that is, if it is inclusion-wise maximal among weak-slimes). Notice that a slime has size at least $2$ according to its definition. Any weak-slime has to have its entries alternating: it has to be of the form $a,b,a,b,\ldots,a$ or $a,b,a,b,\ldots,b$.

Given a slime $s$ of size $l$, its \newword{weight} $w(s)$ is defined as $\floor*{\frac{l}{2}}$. We denote the weight of the code $w(f)$ to be the sum of the weights of all slimes inside the code. We say that the slime is \newword{invalid} if it is the entire $[n]$ without a cutoff: the sequence $f_1,\ldots,f_n$ is an invalid slime when $f_i+f_{i+1} = m_f$ for all $i \in [n]$. We say that the code is \newword{valid} if it doesn't contain an invalid slime. A constant code (a code where $f_1=f_2=\cdots=f_n$) would have an invalid slime $[n]$ (without a cutoff) and hence be an invalid code. Since odd slimes have to be of form $a,b,\ldots,a$ with same entries on its enpoints, it is not hard to see the following:

\begin{lemma}
\label{lem:oddinv}
When $n$ is odd, the only invalid codes of length $n$ are the constant codes.
\end{lemma}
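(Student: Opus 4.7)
The plan is to unpack the definition of an invalid code and extract a periodicity condition on the entries. By hypothesis, an invalid code of length $n$ satisfies $f_i + f_{i+1} = m_f$ for every $i \in [n]$, where indices are taken cyclically modulo $n$. The strategy is to compare two consecutive such equations and then exploit the fact that $n$ is odd to force the code to be constant.

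First, I would subtract the equation $f_i + f_{i+1} = m_f$ from $f_{i+1} + f_{i+2} = m_f$ to obtain $f_{i+2} = f_i$ for every $i \in [n]$. This says that the cyclic sequence $f_1, f_2, \ldots, f_n$ is invariant under a shift by $2$. Next, I would observe that iterating this step-$2$ shift starting from index $1$ produces the sequence $1, 3, 5, \ldots$, which traverses every residue in $\ZZ_n$ exactly once because $\gcd(2,n) = 1$ when $n$ is odd. Consequently, all the entries $f_1, f_2, \ldots, f_n$ coincide, so $f$ is constant.

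Finally, I would note the converse direction, which is essentially already observed in the text just before the lemma: if $f$ is constant with common value $c$, then $f_i + f_{i+1} = 2c = m_f$ for every $i$, so the entire $[n]$ (without a cutoff) is an invalid slime and $f$ is indeed an invalid code. Combining the two directions gives the lemma.

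There is no serious obstacle here; the only thing to be careful about is the cyclic indexing, specifically making sure that the relation $f_{i+2} = f_i$ holds for \emph{every} $i \in [n]$ (including the wraparound cases $i = n-1$ and $i = n$), which is exactly what the invalid-slime condition $f_i + f_{i+1} = m_f$ for all $i \in [n]$ supplies. The oddness of $n$ enters only through the coprimality $\gcd(2,n) = 1$ that makes the step-$2$ walk transitive on $\ZZ_n$.
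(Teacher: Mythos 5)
Your proof is correct and matches the paper's (unwritten) argument: the paper dismisses this lemma with the observation that a full-circle slime forces an alternating pattern $a,b,a,b,\ldots$, which on an odd cycle collapses to a constant — exactly the content of your relation $f_{i+2}=f_i$ combined with $\gcd(2,n)=1$. Your version just spells out the details the paper leaves as "not hard to see."
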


\begin{remark}
Given an $(n,k)$-code $f$, its weight $w(f)$ and weighted sum $\sum i f(i)$ (mod $n$) are different. The weighted sum changes by $k$ upon rotation. The weight on the other hand, does not change under rotation.
\end{remark}

%\begin{lemma}
%\label{lem:invalideven}
%If a nonconstant code is invalid, then $n$ is even.
%\end{lemma}

Take a look at Figure~\ref{fig:codes}. All three codes are $(11,11)$-codes. The value $m_f$, the largest among sum of two consecutive entries, is $3$ for all of them. Let us label the positions from $1$ to $11$ starting from the topmost position and going around clockwise. Then the slimes are $\{11,1\},\{2,3,4\},\{7,8,9\}$ for the first code. The weight of that code is $\floor*{\frac{2}{2}}+\floor*{\frac{3}{2}}+\floor*{\frac{3}{2}}=3$.

\begin{figure}[h!]
\centering
 \begin{tikzpicture}
\draw (0:2) arc (0:360:20mm);
%\foreach \i in {1,...,10}{
   \node[state,fill=white] at (-360/11 * 0 +126:2cm) {$2$};
	\node[state,fill=white] at (-360/11 * 1 +126:2cm) {$1$};
	\node[state,fill=white] at (-360/11 * 2 +126:2cm) {$1$};
	\node[state,fill=white] at (-360/11 * 3 +126:2cm) {$2$};
	\node[state,fill=white] at (-360/11 * 4 +126:2cm) {$1$};
	\node[state,fill=white] at (-360/11 * 5 +126:2cm) {$0$};
	\node[state,fill=white] at (-360/11 * 6 +126:2cm) {$1$};
	\node[state,fill=white] at (-360/11 * 7 +126:2cm) {$0$};
	\node[state,fill=white] at (-360/11 * 8 +126:2cm) {$3$};
	\node[state,fill=white] at (-360/11 * 9 +126:2cm) {$0$};
	\node[state,fill=white] at (-360/11 * 10 +126:2cm) {$0$};
 %     }
 \end{tikzpicture}
 \begin{tikzpicture}
\draw (0:2) arc (0:360:20mm);
%\foreach \i in {1,...,10}{
   \node[state,fill=white] at (-360/11 * 0 +126:2cm) {$1$};
	\node[state,fill=white] at (-360/11 * 1 +126:2cm) {$2$};
	\node[state,fill=white] at (-360/11 * 2 +126:2cm) {$1$};
	\node[state,fill=white] at (-360/11 * 3 +126:2cm) {$1$};
	\node[state,fill=white] at (-360/11 * 4 +126:2cm) {$2$};
	\node[state,fill=white] at (-360/11 * 5 +126:2cm) {$0$};
	\node[state,fill=white] at (-360/11 * 6 +126:2cm) {$1$};
	\node[state,fill=white] at (-360/11 * 7 +126:2cm) {$0$};
	\node[state,fill=white] at (-360/11 * 8 +126:2cm) {$2$};
	\node[state,fill=white] at (-360/11 * 9 +126:2cm) {$1$};
	\node[state,fill=white] at (-360/11 * 10 +126:2cm) {$0$};
 %     }
 \end{tikzpicture}
 \begin{tikzpicture}
\draw (0:2) arc (0:360:20mm);
%\foreach \i in {1,...,10}{
   \node[state,fill=white] at (-360/11 * 0 +126:2cm) {$1$};
	\node[state,fill=white] at (-360/11 * 1 +126:2cm) {$1$};
	\node[state,fill=white] at (-360/11 * 2 +126:2cm) {$2$};
	\node[state,fill=white] at (-360/11 * 3 +126:2cm) {$0$};
	\node[state,fill=white] at (-360/11 * 4 +126:2cm) {$3$};
	\node[state,fill=white] at (-360/11 * 5 +126:2cm) {$0$};
	\node[state,fill=white] at (-360/11 * 6 +126:2cm) {$1$};
	\node[state,fill=white] at (-360/11 * 7 +126:2cm) {$0$};
	\node[state,fill=white] at (-360/11 * 8 +126:2cm) {$1$};
	\node[state,fill=white] at (-360/11 * 9 +126:2cm) {$2$};
	\node[state,fill=white] at (-360/11 * 10 +126:2cm) {$0$};
 %     }
 \end{tikzpicture}
\caption{Some $(11,11)$-codes. The first code has $\{11,1\},\{2,3,4\},\{7,8,9\}$ as its slimes (the topmost position is indexed with $1$). If we do the forward migration on the leftmost code, we get the code in the middle. If we do the forward migration again on the middle code, we get the rightmost code. } \label{fig:codes}
\end{figure}
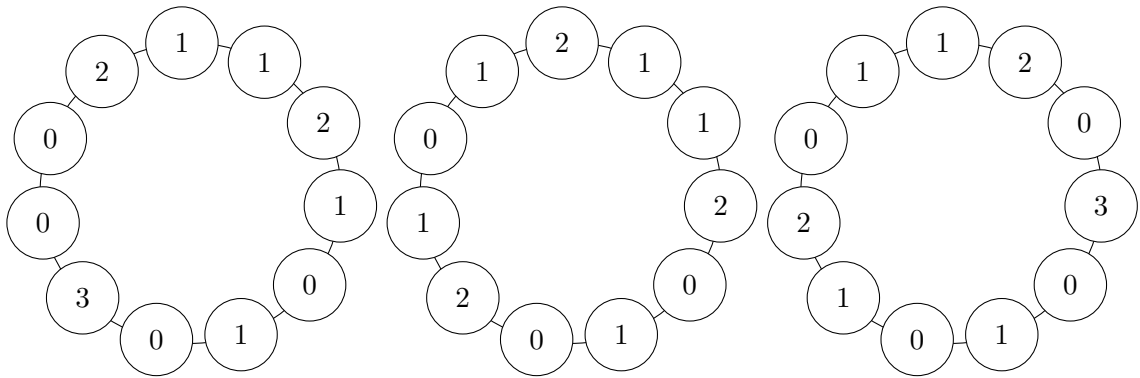

Given a valid slime that has even size, it has to be of the form
$$ a, b, \ldots ,a,b.$$
The \newword{(forward) move} on this slime transforms it to
$$ a-1, b+1, \ldots, a-1, b+1,$$
whereas the \newword{backward move} transforms the sequence to
$$ a+1, b-1, \ldots, a+1,b-1.$$
These moves are well-defined since neither $a$ nor $b$ can be $0$. Otherwise, $a$ or $b$ will be $m_f$ and the sequence can't have both $a$ and $b$ at its endpoint to be a slime. The move transforms a slime into a weak slime which can be extended to a slime by potentially adding one more position to the right (left for a backward move).

For a valid slime that has odd size, it has to be of the form
$$ a,b,\ldots,a,b,a.$$
The \newword{(forward) move} on this slime transforms the sequence to
$$ a,b-1,a+1,b-1,\ldots,a+1,b-1,a+1,$$
whereas the \newword{backward move} transforms the sequence to
$$ a+1, b-1, \ldots, a+1,b-1,a.$$
The move cuts off the leftmost element (rightmost element for a backward move) and the resulting weak slime can be extended to a slime by potentially adding one more position to the right (left for a backward move).

Given an $(n,k)$-code $f$, let $\phi_{\rightarrow}(f)$ be the code you get from $f$ by doing a forward move on all slimes of $f$ at the same time. We call this the \newword{(forward) migration} of all slimes. Similarly, let $\phi_{\leftarrow}(f)$ be the code you get from $f$ by doing a backward move on all slimes of $f$ at the same time and call this the \newword{backward migration} of all slimes. The migration changes $\sum i f(i)$ modulo $n$ by the weight of the code.

Again take a look at Figure~\ref{fig:codes}. If we do the forward migration on the first code, we get the second code. If we do the forward migration of the second code, we get the third code. If we do the backward migration on the second code, we get the first code back. 

\begin{lemma}
\label{lem:migreversible}
For any valid $(n,k)$-code $f$, we have $\phi_{\leftarrow}(\phi_{\rightarrow}(f)) = f$.
\end{lemma}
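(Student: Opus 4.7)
The plan is to track how the positions and values of the slimes change under $\phi_{\rightarrow}$, and then to verify that $\phi_{\leftarrow}$ acts as a local inverse on each resulting slime. I will first show that $m_{\phi_{\rightarrow}(f)} = m_f$. Inside any slime of $f$ the alternating values satisfy $(a-1)+(b+1) = a+b = m_f$, so all interior consecutive sums are preserved. Entries outside the slimes of $f$ are untouched by the forward move, so sums between two such positions are unchanged. At a boundary position between a slime and its exterior, the sum changes by at most one, and the strict inequality in the definition of a slime guarantees the result is still at most $m_f$. So the maximum is unchanged and is still attained in what was a slime of $f$.

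Next I will identify the slimes of $\phi_{\rightarrow}(f)$ in terms of those of $f$. For an even slime $[i,j]$ of $f$ with pattern $a,b,\ldots,a,b$, the forward move produces $a-1,b+1,\ldots,a-1,b+1$ on the same positions; this is a weak-slime on $[i,j]$, and a direct computation shows it extends to the right exactly when $f_{j+1}=a-1$, giving a slime $[i,j]$ or $[i,j+1]$ in $\phi_{\rightarrow}(f)$. For an odd slime $[i,j]$ of $f$ with pattern $a,b,\ldots,a,b,a$, the forward move fixes position $i$ and produces an even weak-slime on $[i+1,j]$, which extends to the right exactly when $f_{j+1}=b-1$, giving a slime $[i+1,j]$ or $[i+1,j+1]$. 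In every case, the new left endpoint is separated from the previous position by a sum that I compute to be strictly less than $m_f$ using the slime hypothesis for $[i,j]$, so the claimed left endpoint really is the left endpoint of a slime in $\phi_{\rightarrow}(f)$.

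The heart of the proof is a short four-case verification that applying the backward move to each slime of $\phi_{\rightarrow}(f)$ restores the original $f$. In the unextended even case the backward formula directly inverts the forward formula on $[i,j]$. In the extended even case the new slime is odd with pattern $a-1, b+1, \ldots, a-1$, and the backward move on an odd slime keeps the rightmost entry fixed (precisely the unchanged value $f_{j+1}=a-1$) while returning the rest to $a,b,\ldots,a,b$. The two odd cases (unextended, producing an even slime $[i+1,j]$; and extended, producing the odd slime $[i+1,j+1]$) are handled the same way: in each, the single endpoint that the backward move leaves fixed coincides exactly with a position that $\phi_{\rightarrow}$ did not touch, so the original value is automatically preserved. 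Positions outside every slime of $\phi_{\rightarrow}(f)$ were also outside every slime of $f$, and neither map alters them, so they already agree with $f$.

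The part I expect to be most delicate is confirming that the shifted and possibly extended images of two distinct slimes of $f$ cannot collide or merge in $\phi_{\rightarrow}(f)$; otherwise ``apply the backward move to every slime of $\phi_{\rightarrow}(f)$'' would fail to match ``apply the backward move to the image of every slime of $f$.'' I plan to handle this by examining the gap positions between consecutive slimes of $f$: an odd slime advances its left endpoint by one, while even an extended slime advances its right endpoint by only one, so the combinatorial room between two slimes of $f$ is always sufficient to keep their images separated, with the separating consecutive sum computed explicitly to remain strictly less than $m_f$ in each subcase (even--even, even--odd, odd--even, odd--odd). Once this disjointness is in hand, the local inverse computation handles each slime independently and the lemma follows.
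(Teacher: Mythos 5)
Your proposal is correct and follows essentially the same route as the paper's proof: track each slime locally, observe that a forward move either keeps its position set or extends/shifts it by exactly one position whose value was untouched, and check that the backward move undoes this in each parity case. You are in fact somewhat more thorough than the paper here, since you explicitly verify that $m_f$ is preserved and that images of distinct slimes cannot merge — a point the paper only addresses in the proof of the subsequent lemma on weight preservation.
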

\begin{proof}
Any even sized slime $s$ of the form $a,b,\ldots,a,b$ after a forward move becomes either $a-1,b+1,\ldots,a-1,b+1$ or $a-1,b+1,\ldots,a-1,b+1,a-1$ the latter absorbing a new element to the right. In the first case it is obvious the backward move returns it back to $s$. In the second case since $a-1 + b < a+b$, the rightmost elements gets cut off and we get $s$ back as well. Similar analysis holds true for odd sized slimes.
\end{proof}

In Figure~\ref{fig:codes}, notice that the weights of all three codes are the same. It is true in general that the migration operation preserves the number of slimes and the total weight as well:

\begin{lemma}
\label{lem:migpreserve}
For any valid $(n,k)$-code $f$, a migration does not change the weight of $f$. That is, we have $w(\phi_{\leftarrow}(f)) = w(f) = w(\phi_{\rightarrow}(f))$.
\end{lemma}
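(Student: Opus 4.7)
My plan is to set up a weight-preserving bijection between the slimes of $f$ and the slimes of $f':=\phi_{\rightarrow}(f)$; the case for $\phi_{\leftarrow}$ then follows by applying the forward statement to $\phi_{\rightarrow}(f)$ together with Lemma~\ref{lem:migreversible}.

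The first step is to verify $m_{f'}=m_f$. Inside any even old slime $[i,j]$ of $f$, every new consecutive sum within equals $(a-1)+(b+1)=a+b=m_f$, guaranteeing $m_{f'}\geq m_f$. A short case check on each pair $(p,p+1)$---depending on whether $p$ and $p+1$ lie in the same old slime, in different old slimes, or outside every old slime---shows every new consecutive sum is at most $m_f$, giving $m_{f'}\leq m_f$.

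Next I would match each old slime $s=[i,j]$ to a new slime of the same weight. If $s$ is even of size $2m$, the new weak slime on $[i,j]$ has every internal sum equal to $m_f$; its left endpoint stays at $i$ because boundary analysis forces $f_{i-1}^{\text{new}}+f_i^{\text{new}}<m_f$, and its right endpoint is $j$ or $j+1$ depending on the local configuration. Size $2m$ or $2m+1$ both give weight $m=w(s)$. If $s$ is odd of size $2m+1$, then $f_i^{\text{new}}=a$ is unchanged but $f_{i+1}^{\text{new}}=b-1$, so the pair $(i,i+1)$ sums to $m_f-1$ and the new slime descended from $s$ begins at $i+1$; its right endpoint is again $j$ or $j+1$, giving size $2m$ or $2m+1$ and weight $m=w(s)$. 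To see this is a bijection, I would check that the left endpoints of new slimes are precisely the left endpoints of even old slimes together with the second positions of odd old slimes, by verifying that every $p$ with $f_{p-1}^{\text{new}}+f_p^{\text{new}}<m_f$ and $f_p^{\text{new}}+f_{p+1}^{\text{new}}=m_f$ is of one of these two types. Summing weights then gives $w(f')=w(f)$.

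The main obstacle is the interaction between adjacent slimes: when a new slime extends rightward to absorb the (unchanged) leftmost position of a following odd old slime---which happens precisely when the relevant old boundary sum equals $m_f-1$ and the next old slime is odd---that same position must simultaneously drop out of the odd slime's new slime on the left. I would resolve this by treating such adjacent pairs together and verifying that the size shifts $2m_1\to 2m_1+1$ on the absorbing slime and $2m_2+1\to 2m_2$ on the absorbed-from slime both preserve $\lfloor l/2\rfloor$, so the total weight is unaffected.
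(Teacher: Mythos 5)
Your proposal is correct and follows essentially the same route as the paper: track each slime through the move, observe that an even slime can only keep its size or grow by one while an odd slime can only keep its size or shrink by one (so $\lfloor l/2\rfloor$ is unchanged), and check the boundary between adjacent slimes to rule out merging or misattribution of positions. You carry out the bookkeeping more explicitly than the paper does (verifying $m_{f'}=m_f$ and characterizing the left endpoints of the new slimes), but the underlying argument is the same.
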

\begin{proof}

Given an odd-sized slime, its size is either maintained or decreased by $1$ after a movement. Given an even-sized slime, its size is either maintained or increased by $1$ after a movement. Hence the weight of each slime is maintained after migration. 

Consider the case when we have two adjacent slimes in $f$: $f_i,\ldots,f_k$ is a slime and $f_{k+1},f_{k+2},\ldots,f_j$ is another slime. After forward migration, if the latter slime was even length then $\ldots,f_k$ and $f_{k+1},\ldots$ are still separate slimes since $f_k+f_{k+1}$ stays the same and is strictly smaller than $m_f$. If the latter slime had odd length then $f_{k+1}$ gets cut off from the slime to the right anyways. So there is no fear of two slimes merging after a migration.

We have seen that the weight of each slime is preserved and slimes do not merge nor split, allowing us to conclude that the weight of the code is preserved.

\end{proof}

%Let $c$ be the map from $\F_{n,k}$ to itself by a cyclic rotation of the entries:
%$$c(f_0,\ldots,f_{n-1}) = (f_1,\ldots,f_{n-1},f_0).$$

%We say that a map $\chi$ from $\F_{n,k}$ to itself is \newword{rotation invariant} if $c \chi = \chi c$.

%\begin{lemma}
%\label{lem:migrotation}
%For any valid $(n,k)$-code $f$, migration is rotation invariant. That is, $\phi_{\rightarrow}(c(f)) = c(\phi_{\rightarrow}(f))$.
%\end{lemma}
%\begin{proof}
%The migration map does not depend on any choice of a position on the circle and hence is rotation invariant.
%\end{proof}

Given a valid $(n,k)$-code $f$ that has weight $w(f)$ coprime with $n$, let $i(f)$ denote the inverse of $w(f)$ modulo $n$. Define $\phi(f)$ to be the map that sends $f$ to $(\phi_{\rightarrow})^{i(f)}(f)$. Combining what we have so far we get:

\begin{proposition}
\label{prop:mig}
Suppose that $w(f)$ is coprime with $n$. Then the map $\phi$ is invertible and weight preserving. Furthermore, the image of a valid code in $\F_{n,k,t}$ under $\phi$ is a valid code of $\F_{n,k,t+1}$.
\end{proposition}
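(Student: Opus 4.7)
The plan is to verify four separate claims in turn: that $\phi$ preserves weight, that $\phi$ shifts the weighted sum modulo $n$ by exactly $1$, that $\phi$ is invertible, and that $\phi$ preserves validity.

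For weight preservation, Lemma~\ref{lem:migpreserve} gives $w(\phi_{\rightarrow}(g)) = w(g)$ for every valid $g$, so iterating yields $w(\phi(f)) = w(f)$; in particular $i(\phi(f)) = i(f)$, which will be used below.

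For the modular shift, the remark just before the statement records that one application of $\phi_{\rightarrow}$ changes $\sum_i i f(i) \pmod n$ by $w(f)$. Since the weight is constant along the forward orbit, each of the $i(f)$ migrations contributes the same $w(f)$, summing to $i(f)\, w(f) \equiv 1 \pmod n$. Thus $\phi$ sends $\F_{n,k,t}$ into $\F_{n,k,t+1}$. For invertibility I set $\psi(g) := (\phi_{\leftarrow})^{i(g)}(g)$. The backward analogue of Lemma~\ref{lem:migpreserve} (same proof) gives $w(\phi_{\leftarrow}(g)) = w(g)$, so $i(\psi(g)) = i(g)$; combined with $i(\phi(f)) = i(f)$ and iterated application of Lemma~\ref{lem:migreversible}, this yields $\psi \circ \phi = \mathrm{id}$, and symmetrically $\phi \circ \psi = \mathrm{id}$.

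The main obstacle is preservation of validity: once I show $\phi_{\rightarrow}$ sends valid codes to valid codes, iteration gives the same for $\phi$. I would first verify that $m_{\phi_{\rightarrow}(f)} = m_f$ by noting that internal pairs of a slime retain their sum $m_f$ after a forward move, pairs with both positions outside every slime are unchanged, and each boundary pair of a slime changes by at most $1$, hence stays $\le m_f$. Validity of $\phi_{\rightarrow}(f)$ then reduces to exhibiting a single consecutive pair whose sum is strictly less than $m_f$. If $f$ has an odd slime $(a,b,\ldots,a)$, the forward move leaves its first two entries as $(a,b-1)$, furnishing such a pair. If every slime of $f$ is even, I would classify each sub-$m_f$ pair of $f$ by the slime status of its two positions (both outside; slime-end then outside; outside then slime-start; adjacent slime-end/slime-start) and check directly from the move formulas that only the second type ``slime-end then outside'' can rise to $m_f$, the other three types being preserved or decreased. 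A short argument around the cycle rules out the scenario where every sub-$m_f$ pair of $f$ has this surviving type: any position outside every slime would contribute a pair of a forbidden type on at least one side, so all positions would have to lie in slimes, and then the sub-$m_f$ pairs would all be of the adjacent-slime type and unchanged by migration. Hence $\phi_{\rightarrow}(f)$ retains a sub-$m_f$ pair and is valid.
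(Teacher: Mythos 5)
Your proposal is correct, and it is worth noting that the paper itself offers no proof of this proposition at all --- it is introduced with ``Combining what we have so far we get,'' so the intended argument is just the concatenation of Lemma~\ref{lem:migreversible} (invertibility of one migration step), Lemma~\ref{lem:migpreserve} (weight preservation, hence $i(f)$ is constant along the orbit), and the remark that one forward migration shifts $\sum i f(i)$ by $w(f)$ modulo $n$. Your first three claims reproduce exactly that implicit route, with the small but worthwhile extra care of observing that $i(\phi(f)) = i(f)$ and that $\phi\circ\psi=\mathrm{id}$ needs the mirror statement $\phi_{\rightarrow}\circ\phi_{\leftarrow}=\mathrm{id}$ and backward validity preservation, both of which do follow by the symmetric arguments you indicate. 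The genuinely new content in your write-up is the fourth claim, preservation of validity, which the paper never addresses even though the proposition asserts it and the well-definedness of $\phi_{\leftarrow}$ on the image depends on it. Your argument there is sound: $m_f$ is preserved because interior slime pairs keep their sum, exterior pairs are unchanged, and boundary pairs start at most $m_f-1$ and move by at most $1$; an odd slime always leaves behind the sub-$m_f$ pair $(a,b-1)$; and in the all-even case your four-way classification correctly isolates ``slime-end then outside'' as the only pair type that can rise to $m_f$, after which the around-the-cycle argument (a position outside all slimes forces a surviving sub-$m_f$ pair on its right, and if no such position exists all sub-$m_f$ pairs are of the unchanged adjacent-slime type) closes the case. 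So the proposal is not merely correct but fills a real gap in the paper's exposition.
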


\section{Bijection with necklaces}

In this section we show a bijection between $\N_{n,k}$ and $\F_{n,k,0}$ when $n$ is prime as promised. We show that coming up with a certain map on $\F_{n,k}$ implies the construction of the bijection even in the general case.

Given a code $f$, let $c$ be the rotation map that performs a cyclic rotation of the entries:
$$c(f_0,\ldots,f_{n-1}) = (f_1,\ldots,f_{n-1},f_0).$$

 We are going to express necklaces of $\N_{n,k}$ as sequences $(g_0,\ldots,g_{n-1})$ where we label the black beads $1$ to $n$ in some clockwise order, let $g_i$ count the number of white beads between black beads labeled $i$ and $i+1$. Let the sequences be equivalent under the cyclic shift (that is $(g_0,\ldots,g_{n-1}) \equiv (g_1,g_2,\ldots,g_{n-1},g_0)$). In other words, we think of a necklace in $\N_{n,k}$ as a collection of codes ${f,c(f),\ldots,c^{n-1}(f)}$. 

We dedicate $q$ to denote $\frac{n}{\text{gcd}(n,k)}$. For a code $f \in \F_{n,k,t}$, the collection $\{f, c^q(f), c^{2q}(f), \ldots, c^{n-q}(f) \}$ is called a \newword{neck-class} of $\F_{n,k,t}$. These are exactly the codes of $\F_{n,k,t}$ that are rotation equivalent to $f$.

%We define the number $q=q_{n,k}$ to be the smallest positive number such that $k q \equiv 0$ mod $n$. 

Let $\N'_{n,k}$ denote the necklaces of $\N_{n,k}$ that have period $n$. Let $\F'_{n,k,t}$ denote the codes of $\F_{n,k,t}$ that have period $n$. 

%Let $c$ be the map from $\F'_{n,k}$ to itself by a cyclic rotation of the entries:
%$$c(f_0,\ldots,f_{n-1}) = (f_1,\ldots,f_{n-1},f_0).$$

\begin{definition}
We say that a map $\chi$ from $\F'_{n,k}:= \bigcup_t \F'_{n,k,t}$ to itself is a \newword{riwi-map} if:
\begin{itemize}
\item it is a bijective map,
\item (rotation invariant) $c \chi = \chi c$, and
\item (weighted sum increasing) if $f \in \F'_{n,k,t}$ then $\chi(f) \in \F'_{n,k,t+1}$.
\end{itemize}
\end{definition}

Using a riwi-map $\chi$ we can construct a map $\sigma_{\chi}$ between $\N'_{n,k}$ and $\F'_{n,k,0}$ in the following way: for each neck-class in $\F_{n,k,0}$, fix an arbitrary representative $f$. Let $\sigma_{\chi}$ be a map from $\F'_{n,k,0}$ to $\N'_{n,k}$ that sends $c^{iq}(f)$ to $\chi^{i}(f)$ for each $0 \leq i < \frac{n}{q}$. 

\begin{theorem}
\label{thm:main}
When $\chi$ is a riwi-map, the map $\sigma_{\chi}$ is a bijection between $\F'_{n,k,0}$ and 
$\N'_{n,k}$.
\end{theorem}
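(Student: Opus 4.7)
My plan is to verify $\sigma_\chi$ is well-defined, then establish injectivity by combining the rotation-invariance of $\chi$ with a weighted-sum bookkeeping argument, and finally conclude bijectivity via a cardinality count. For well-definedness, fix $g \in \F'_{n,k,0}$ and let $f$ be the chosen representative of the unique neck-class containing $g$. Since $f$ has period $n$, the orbit $\{f, c(f), \ldots, c^{n-1}(f)\}$ has $n$ distinct elements, and $c^j(f) \in \F'_{n,k,0}$ precisely when $jk \equiv 0 \pmod{n}$, i.e.\ when $j$ is a multiple of $q$. Hence there is a unique $i \in \{0, \ldots, n/q - 1\}$ with $g = c^{iq}(f)$, making $\sigma_\chi(g) = [\chi^i(f)] \in \N'_{n,k}$ unambiguous; note $[\chi^i(f)]$ lies in $\N'_{n,k}$ because $\chi$ preserves $\F'_{n,k}$, which consists of period-$n$ codes.

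For injectivity, assume $\sigma_\chi(g_1) = \sigma_\chi(g_2)$, where $g_s = c^{i_s q}(f_s)$ with $f_s$ the chosen neck-class representatives. Then $\chi^{i_2}(f_2) = c^j \chi^{i_1}(f_1)$ for some $j$, and by rotation invariance the right-hand side equals $\chi^{i_1}(c^j(f_1))$. Comparing weighted sums modulo $n$, since $\chi$ shifts the weighted sum by $1$ and $c$ shifts it by $k$, we get $i_2 \equiv i_1 + jk \pmod{n}$. Because $\{jk \bmod n : j \in \ZZ\}$ is precisely the set of multiples of $d := \text{gcd}(n,k) = n/q$, and because $0 \leq i_1, i_2 < d$, we must have $i_1 = i_2$ and $j$ a multiple of $q$. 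Applying $\chi^{-i_1}$ then gives $f_2 = c^j(f_1)$, so $f_1$ and $f_2$ lie in the same neck-class of $\F'_{n,k,0}$, forcing $f_1 = f_2$ as chosen representatives, and hence $g_1 = g_2$.

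To finish, the riwi-axiom that $\chi$ bijects $\F'_{n,k,t}$ with $\F'_{n,k,t+1}$ makes all these pieces equinumerous, so $|\F'_{n,k,0}| = |\F'_{n,k}|/n$. Every period-$n$ necklace is a rotation orbit of size exactly $n$ in $\F'_{n,k}$, giving $|\F'_{n,k}| = n \cdot |\N'_{n,k}|$, and so $|\F'_{n,k,0}| = |\N'_{n,k}|$. Injectivity between finite sets of equal cardinality then yields the bijection. The one delicate point is the injectivity step, specifically extracting $i_1 = i_2$ and $f_1 = f_2$ from the necklace equality $[\chi^{i_1}(f_1)] = [\chi^{i_2}(f_2)]$; this is exactly where the rotation-invariance and weighted-sum-increment axioms work in tandem, and dropping either one collapses the argument.
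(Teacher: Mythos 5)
Your proof is correct. For injectivity you follow the same strategy as the paper --- compare the weighted-sum strata $\F'_{n,k,t}$ to force $i_1=i_2$, then use bijectivity of $\chi$ --- but your execution is actually more careful: the paper passes from equality of necklaces straight to equality of codes ($\chi^i(f)=\chi^i(g)$), silently absorbing the rotation relating the two, whereas you keep the rotation $c^j$ explicit, commute it past $\chi^{i_1}$ via rotation invariance, and use the fact that the multiples of $k$ modulo $n$ are exactly the multiples of $\gcd(n,k)=n/q$ to pin down $i_1=i_2$ and to land $f_2$ in the neck-class of $f_1$. Where you genuinely diverge is surjectivity: the paper constructs a preimage explicitly (rotate the given code into $\F'_{n,k,i}$ with $0\le i<n/q$, pull its neck-class back through $(\chi^{-1})^{i}$ to a neck-class of $\F'_{n,k,0}$, and take the chosen representative), while you replace this with the cardinality count $|\F'_{n,k,0}|=|\F'_{n,k}|/n=|\N'_{n,k}|$ and invoke finiteness. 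Both are valid: the paper's route produces an explicit inverse of $\sigma_\chi$, which is useful if one wants to compute the bijection, while your count is shorter and makes transparent why the two sets are equinumerous in the first place. The only point worth making explicit in your counting step is that the $n$ sets $\F'_{n,k,t}$, $t=0,\dots,n-1$, partition $\F'_{n,k}$, which is what justifies dividing by $n$.
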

\begin{proof}

We first show that the map is one-to-one. Assume for sake of contradiction that the image of some $c^{iq}(f)$ and $c^{jq}(g)$ are the same. Due to $\chi^i(f) \in \F'_{n,k,i}$, we must have $i=j$. But since $\chi$ is a bijective map, $\chi^i(f) = \chi^i(g)$ implies $f=g$ and we get a contradiction. 

Next we show that the map is onto. Pick any necklace in $\N'_{n,k}$. Choosing a position here gives a code $g$ in $\F'_{n,k,j\frac{n}{q}+i}$ for some $0 \leq j < q $ and $0 \leq i < \frac{n}{q}$. We can replace $g$ with a rotation equivalent code in $\F'_{n,k,i}$. Take the neck-class in $\F'_{n,k,i}$ containing $g$. Thanks to $\chi$ being rotation invariant, applying $(\chi^{-1})^{i}$ on the neck-class gives a neck-class in $\F'_{n,k,0}$. Pick $f$ to be the chosen representative of that neck-class. Then $c^{iq}(f)$ is mapped to $\chi^i(f)$ under $\sigma_{\chi}$ which is rotation equivalent to $g$.

\end{proof}

For codes and necklaces of period $p < n$ in $\N_{n,k}$ and $\F_{n,k,0}$ (here $p$ has to be a common divisor of $n$ and $k$), we can extend the bijection between $\N'_{\frac{n}{p},\frac{k}{p}}$ and $\F'_{\frac{n}{p},\frac{k}{p},0}$. Hence the problem of constructing a bijection between $\N_{n,k}$ and $\F_{n,k,0}$ can be reduced to the problem of finding a riwi-map on $\F'_{n,k}$'s.

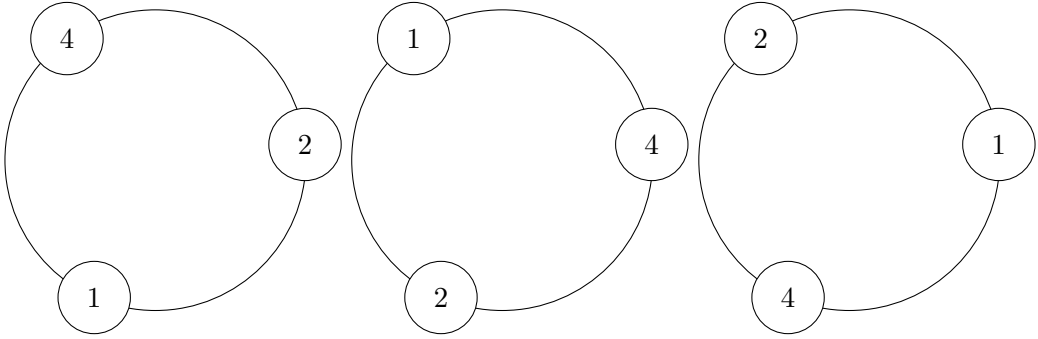
\begin{figure}[h!]

\centering
 \begin{tikzpicture}
\draw (0:2) arc (0:360:20mm);
%\foreach \i in {1,...,10}{
   \node[state,fill=white] at (-360/3 * 0 +126:2cm) {$4$};
	\node[state,fill=white] at (-360/3 * 1 +126:2cm) {$2$};
	\node[state,fill=white] at (-360/3 * 2 +126:2cm) {$1$};
 %     }
 \end{tikzpicture}
 \begin{tikzpicture}
\draw (0:2) arc (0:360:20mm);
%\foreach \i in {1,...,10}{
   \node[state,fill=white] at (-360/3 * 0 +126:2cm) {$1$};
	\node[state,fill=white] at (-360/3 * 1 +126:2cm) {$4$};
	\node[state,fill=white] at (-360/3 * 2 +126:2cm) {$2$};
 %     }
 \end{tikzpicture}
 \begin{tikzpicture}
\draw (0:2) arc (0:360:20mm);
%\foreach \i in {1,...,10}{
   \node[state,fill=white] at (-360/3 * 0 +126:2cm) {$2$};
	\node[state,fill=white] at (-360/3 * 1 +126:2cm) {$1$};
	\node[state,fill=white] at (-360/3 * 2 +126:2cm) {$4$};
 %     }
 \end{tikzpicture}
\caption{Codes corresponding to the same necklace of $\N_{3,7}$, where the top poisition is labeled with $0$. The left code is in $\F_{3,7,1}$, middle code is in $\F_{3,7,2}$ and the right code is in $\F_{3,7,0}$. Rotation increases the weighted sum by $7$ so any code of $\F_{3,7}$ can be rotated suitably many times to bring it into $\F_{3,7,0}$.}
\label{fig:37coven}
\end{figure}

It was proved in \cite{CP} that there is a bijection between $\N_{n,k}$ and $\F_{n,k,0}$ when $n$ and $k$ are coprime. Another way to think of this is that one can simply take a certain power of the rotation map as the riwi-map in this case (example in Figure~\ref{fig:37coven}):
\begin{corollary}
When $n$ and $k$ are coprime, Theorem~\ref{thm:main} gives a bijection between $\N_{n,k}$ and $\F_{n,k,0}$.
\end{corollary}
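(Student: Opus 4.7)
The plan is to realize the riwi-map required by Theorem~\ref{thm:main} as a suitable power of the rotation map $c$, then verify the three defining conditions and the reduction from primed to unprimed sets.

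First I would confirm that when $\gcd(n,k)=1$, every code in $\F_{n,k}$ has period exactly $n$, so that $\F'_{n,k,t} = \F_{n,k,t}$ and $\N'_{n,k} = \N_{n,k}$. Indeed, if a code $f$ had period $p \mid n$ with $p < n$, then $f$ would consist of $n/p$ concatenated copies of a block of length $p$ whose sum is $kp/n$; for this to be an integer with $\gcd(n,k)=1$ we would need $n \mid p$, contradicting $p < n$. Hence producing a riwi-map on $\F'_{n,k}$ will give the desired bijection between $\F_{n,k,0}$ and $\N_{n,k}$ via Theorem~\ref{thm:main}.

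Next I would produce the riwi-map. As noted in the remark following Lemma~\ref{lem:oddinv}, a direct reindexing $j = i+1$ in the weighted sum gives $\sum i\, c(f)(i) \equiv \sum i\, f(i) - k \pmod{n}$, so $c$ shifts the residue $t$ by $-k$ modulo $n$ (the sign depending only on the orientation convention). Since $\gcd(n,k)=1$, there is an integer $m$ with $-mk \equiv 1 \pmod{n}$; set $\chi := c^m$. Then $\chi$ is a bijection on $\F'_{n,k}$ as a power of $c$, it commutes with $c$ because all powers of $c$ commute, and by the choice of $m$ it carries $\F'_{n,k,t}$ to $\F'_{n,k,t+1}$. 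Hence $\chi$ is a riwi-map.

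Applying Theorem~\ref{thm:main} to this $\chi$ produces a bijection $\sigma_{\chi} : \F'_{n,k,0} \to \N'_{n,k}$, and by the first step this is the same as a bijection $\F_{n,k,0} \to \N_{n,k}$. There is essentially no obstacle, since every riwi-condition is inherited from elementary properties of $c$; the one point requiring a brief argument is the period claim, which is a short divisibility computation using $\gcd(n,k)=1$.
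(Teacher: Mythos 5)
Your proposal is correct and follows essentially the same route as the paper: identify $\F_{n,k,t}=\F'_{n,k,t}$ and $\N_{n,k}=\N'_{n,k}$ in the coprime case, and take the riwi-map to be the power of the rotation $c$ that shifts the weighted sum by $1$ modulo $n$ (your sign bookkeeping for how $c$ changes the weighted sum is handled correctly by choosing the exponent accordingly). The only difference is that you spell out the divisibility argument for the period claim, which the paper leaves implicit.
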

\begin{proof}
When $n$ and $k$ are coprime, every code and necklace have period $n$. So we have $\N_{n,k} = \N'_{n,k}$ and $\F_{n,k} = \F'_{n,k}$. Pick $i(k)$ to be the inverse of $k$ modulo $n$. Then $c^{i(k)}$ is a riwi-map since rotating $i(k)$ times increases the weighted sum of a code by $k \cdot i(k) = 1$.
\end{proof}

In the case $n$ is an odd prime, we use the slime migration map $\phi$ constructed in the previous section as our riwi-map (example in Figure~\ref{fig:33coven} and Figure~\ref{fig:33map}):

\begin{corollary}
When $n$ is an odd prime, Theorem~\ref{thm:main} gives a bijection between $\N_{n,k}$ and $\F_{n,k,0}$.
\end{corollary}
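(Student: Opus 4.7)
The plan is to verify that the slime migration map $\phi$ defined just before Proposition~\ref{prop:mig} is a riwi-map when restricted to $\F'_{n,k}$, and then to apply Theorem~\ref{thm:main} together with the period-reduction remark that immediately follows it. The real work is in confirming that $\phi$ is well-defined on $\F'_{n,k}$ and satisfies the three riwi-map axioms.

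For well-definedness, every code in $\F'_{n,k}$ must be both valid and have weight coprime to $n$. Validity is immediate from Lemma~\ref{lem:oddinv}: for odd $n$ the only invalid codes are constant, and these have primitive period $1$, so they do not lie in $\F'_{n,k}$ once $n > 1$. For the coprimality, slimes are inclusion-maximal weak-slimes and hence pairwise disjoint (two overlapping slimes would merge into a larger weak-slime, contradicting maximality), with each slime of size at least $2$. Writing the slime sizes as $l_1, \ldots, l_s$ gives $\sum_i l_i \leq n$ and therefore $1 \leq w(f) = \sum_i \lfloor l_i/2 \rfloor \leq \lfloor n/2 \rfloor < n$. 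Since $n$ is prime, this bound forces $\gcd(w(f), n) = 1$, so $\phi$ is defined everywhere on $\F'_{n,k}$.

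Next I would verify the three riwi-map axioms. Bijectivity and the weighted-sum-increasing property come directly from Proposition~\ref{prop:mig}. For rotation invariance, each local forward move depends only on intrinsic data of its slime (size and entries), so $\phi_{\rightarrow}$ commutes with the cyclic rotation $c$; since weight is rotation-invariant (as noted in the remark following Lemma~\ref{lem:oddinv}), $i(f) = i(c(f))$, and hence $\phi = (\phi_{\rightarrow})^{i(f)}$ also commutes with $c$. Because $\phi$ is a bijection commuting with $c$, any rotational symmetry is preserved: $c^p(f) = f$ implies $c^p(\phi(f)) = \phi(f)$, so $\phi$ restricts to a self-map of $\F'_{n,k}$.

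With these verifications, Theorem~\ref{thm:main} yields a bijection between $\F'_{n,k,0}$ and $\N'_{n,k}$, and the period-reduction paragraph after Theorem~\ref{thm:main} lifts this to the desired bijection between $\F_{n,k,0}$ and $\N_{n,k}$; since $n$ is prime, the only non-primitive stratum reduces to the constant code (present exactly when $n \mid k$), which pairs off trivially with the unique non-primitive necklace. I expect the main obstacle to be the coprimality check: primality of $n$ is precisely what promotes the elementary bound $w(f) < n$ into the coprimality hypothesis required by Proposition~\ref{prop:mig}, and it is exactly this step at which the argument breaks down for composite $n$.
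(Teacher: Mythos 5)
Your proof is correct and follows essentially the same route as the paper: reduce to checking that the slime migration $\phi$ is a riwi-map on $\F'_{n,k}$, handle the constant (period-$1$) code separately, and invoke Theorem~\ref{thm:main}. Your explicit verification that $1 \leq w(f) \leq \lfloor n/2\rfloor < n$ forces $\gcd(w(f),n)=1$ is a worthwhile detail: the paper invokes Proposition~\ref{prop:mig} without spelling out why its coprimality hypothesis holds, and this is exactly where primality of $n$ enters.
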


\begin{proof}
When $n$ is an odd prime, every codes and necklaces have period $1$ or $n$. Lemma~\ref{lem:oddinv} tells us that the only invalid codes are the constant codes. Associating the constant codes to constant necklaces, all that remains is to show that $\phi$ of Proposition~\ref{prop:mig} is a riwi-map. Bijection and weighted sum increasing are already done, so we need to check rotation invariance. The forward migration map $\phi_{\rightarrow}$ does not depend on any choice of a position on the circle since slimes are defined using sums of adjacent entries. Therefore $\phi$ is rotation invariant as well.
\end{proof}

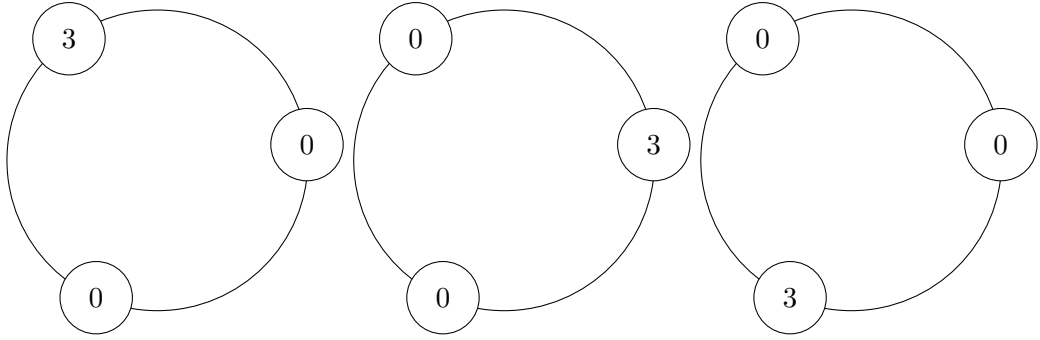
\begin{figure}[h!]
\centering
 \begin{tikzpicture}
\draw (0:2) arc (0:360:20mm);
%\foreach \i in {1,...,10}{
   \node[state,fill=white] at (-360/3 * 0 +126:2cm) {$3$};
	\node[state,fill=white] at (-360/3 * 1 +126:2cm) {$0$};
	\node[state,fill=white] at (-360/3 * 2 +126:2cm) {$0$};
 %     }
 \end{tikzpicture}
 \begin{tikzpicture}
\draw (0:2) arc (0:360:20mm);
%\foreach \i in {1,...,10}{
   \node[state,fill=white] at (-360/3 * 0 +126:2cm) {$0$};
	\node[state,fill=white] at (-360/3 * 1 +126:2cm) {$3$};
	\node[state,fill=white] at (-360/3 * 2 +126:2cm) {$0$};
 %     }
 \end{tikzpicture}
 \begin{tikzpicture}
\draw (0:2) arc (0:360:20mm);
%\foreach \i in {1,...,10}{
   \node[state,fill=white] at (-360/3 * 0 +126:2cm) {$0$};
	\node[state,fill=white] at (-360/3 * 1 +126:2cm) {$0$};
	\node[state,fill=white] at (-360/3 * 2 +126:2cm) {$3$};
 %     }
 \end{tikzpicture}
\caption{A neck-class of $\F'_{3,3,0}$. We need to map each code here to a different necklace, so rotation map isn't going to be enough.}\label{fig:33coven}
\end{figure}

\begin{figure}[h!]
\centering
 \begin{tikzpicture}
\draw (0:2) arc (0:360:20mm);
%\foreach \i in {1,...,10}{
   \node[state,fill=white] at (-360/3 * 0 +126:2cm) {$3$};
	\node[state,fill=white] at (-360/3 * 1 +126:2cm) {$0$};
	\node[state,fill=white] at (-360/3 * 2 +126:2cm) {$0$};
 %     }
 \end{tikzpicture}
 \begin{tikzpicture}
\draw (0:2) arc (0:360:20mm);
%\foreach \i in {1,...,10}{
   \node[state,fill=white] at (-360/3 * 0 +126:2cm) {$2$};
	\node[state,fill=white] at (-360/3 * 1 +126:2cm) {$1$};
	\node[state,fill=white] at (-360/3 * 2 +126:2cm) {$0$};
 %     }
 \end{tikzpicture}
 \begin{tikzpicture}
\draw (0:2) arc (0:360:20mm);
%\foreach \i in {1,...,10}{
   \node[state,fill=white] at (-360/3 * 0 +126:2cm) {$1$};
	\node[state,fill=white] at (-360/3 * 1 +126:2cm) {$2$};
	\node[state,fill=white] at (-360/3 * 2 +126:2cm) {$0$};
 %     }
 \end{tikzpicture}
\caption{Slime migration changes the leftmost code $f$ to the middle code $\phi(f)$. Doing it one more time gives the rightmost code $\phi^2(f)$. The bijection map $\sigma_{\phi}$ is going to map the codes of Figure~\ref{fig:33coven} to the necklaces we have here.}\label{fig:33map}
\end{figure}
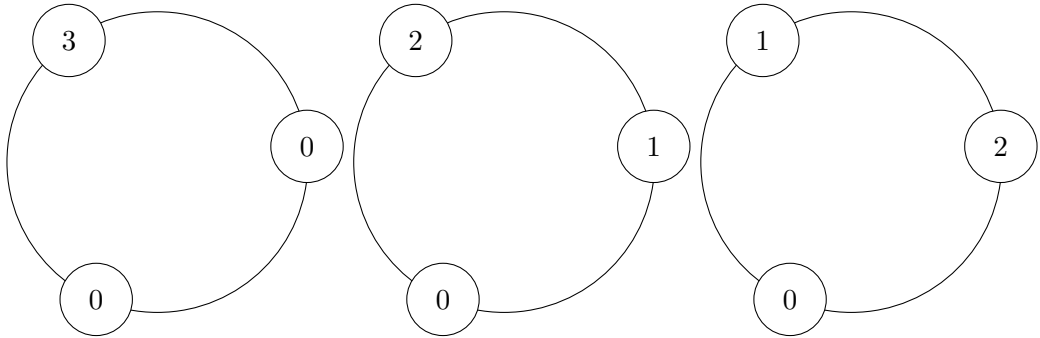

%For example, take a look at Figure~\ref{fig:33coven} and Figure~\ref{fig:33map}. Figure~\ref{fig:33coven} lists the only neck-class of $\F'_{3,3,0}$. We pick the leftmost code as the representative $f$. The middle code is $c(f)$ and the rightmost code is $c^2(f)$. Now look at Figure~\ref{fig:33map} which has $f,\phi(f),\phi^2(f)$ drawn. So $\sigma$ will map each code of Figure~\ref{fig:33coven} to the code directly below in Figure~\ref{fig:33map}.

Pretty much as a corollary we have our main result:

\begin{theorem}
We can construct a bijection between $\N_{n,k}$ and $\F_{n,k,0}$ when $n$ is prime.
\end{theorem}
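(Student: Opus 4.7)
The plan is to combine the two preceding corollaries, adding only a short direct argument to handle the one prime that falls outside their joint scope. Since any prime $n$ is either $2$ or an odd prime, I would split the proof into these two cases.

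For $n$ an odd prime, the corollary immediately preceding this theorem already constructs a bijection via the slime-migration riwi-map $\phi$ of Proposition~\ref{prop:mig}, together with the reduction to smaller parameters described after Theorem~\ref{thm:main}. So in that case there is nothing to add.

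For $n=2$, I would split further on the parity of $k$. If $k$ is odd then $\gcd(2,k)=1$ and the coprime corollary supplies the bijection. If $k$ is even, both $\N_{2,k}$ and $\F_{2,k,0}$ are small enough to biject by hand: every necklace in $\N_{2,k}$ is determined by an unordered pair $\{a,k-a\}$ with $0 \leq a \leq k/2$, and every code in $\F_{2,k,0}$ has the form $(f(1),f(2))$ with $f(1)+f(2)=k$ and $f(1)$ even (the weighted-sum condition mod $2$); sending $\{a,k-a\}$ to $(2a,k-2a)$ is the required bijection, and the period-$1$ piece (the constant code $(k/2,k/2)$ and the corresponding necklace) is matched automatically.

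The main conceptual work is already done in the two preceding corollaries; the only obstacle here is bookkeeping for $n=2$ with $k$ even, which is a finite check. Once that is dispatched the theorem follows by aggregating the three cases.
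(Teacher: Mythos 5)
Your proposal is correct and follows essentially the same route as the paper: reduce to the single remaining case $n=2$ with $k$ even (the odd-prime and coprime cases being handled by the two preceding corollaries) and dispatch it with an explicit hand bijection. Your formula $\{a,k-a\}\mapsto(2a,k-2a)$ differs from the paper's piecewise map but is an equally valid bijection onto the codes with $f(1)$ even; the only quibble is your parenthetical claim that the constant code $(k/2,k/2)$ is matched to the period-$1$ necklace, which is not literally true under your map (the necklace $\{k/2,k/2\}$ goes to $(k,0)$), but this is irrelevant since the theorem only asks for a bijection of the full sets.
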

\begin{proof}
We only need to consider the case when $n=2$ and $k$ is even. For a necklace of form $a,b$ with $a \geq b$, map it to the code $a,b$ if $b$ is even. Otherwise map it to $b-1, a+1$. 
\end{proof}

The reason our argument does not work directly for $n$ that is not an odd prime, is that the slime migration is not guaranteed to be a bijection between $\F'_{n,k,t}$'s. In particular, it isn't guaranteed that the period of the code stays the same after migration. For example if we do forward migration on $2,0$ we get $1,1$ and that was why we had to take care of $n=2$ case separately. Regardless, it would be interesting to see if the approach can be extended to the general case:
\begin{question}
Can one construct a bijection between $\N_{n,k}$ and $\F_{n,k,0}$, for general $n$?
\end{question}

The strategy would be to find the riwi-maps using Theorem~\ref{thm:main}. A good candidate is a modification of the slime migration map $\phi$. Notice that $\phi$ working as a riwi-map only depends on the weight of the codes being coprime with $n$, instead of what $k$ is. Since $\phi$ preserves the weight of the code, we can refine $\N'_{n,k}$ further based on the weight, and separately take care of the cases when the weight isn't coprime with $n$. Using this idea for small numbers like $n=4$ and $n=6$, it is pretty straightforward to construct a riwi-map using the fact that there are not many codes with $w(f)=2$ (being $3$ is impossible) and hence get a bijection easily.

\section*{Acknowledgement}
We would like to thank David Perkinson for introducing us to the problem. We would also like to thank the anonymous referees for their extremely helpful comments.

\bibliographystyle{plain}    % (uses file "plain.bst")
\bibliography{neck-bib}

\end{document}